\newcommand{\beqn}{\vspace{-0.25cm}\begin{eqnarray*}}
\newcommand{\eeqn}{\end{eqnarray*}}
\newcommand{\bneqn}{\vspace{-0.25cm}\begin{eqnarray}}
\newcommand{\eneqn}{\end{eqnarray}}
\newcommand{\p}{{\mathbb P}}
\newcommand{\e}{{\mathbb E}}
\newcommand{\D}{{\mathrm d}}
\newcommand{\ee}{{\mathrm e}}
\renewcommand{\a}{{\alpha}}
\newcommand{\ep}{\varepsilon}
\newcommand{\n}{{(n)}}
\newcommand{\X}{{\widehat X}}
\renewcommand{\aa}{{\widehat a}}
\newcommand{\ind}[1]{\mbox{\rm\large  1}_{\{#1\}}}
\newtheorem{theorem}{Theorem}
\newtheorem{proposition}{Proposition}
\newtheorem{lemma}{Lemma}
\newtheorem{remark}{Remark}
\newtheorem{Algorithm}{Algorithm}
\begin{document}

\author[S. Asmussen and J. Ivanovs]{S\o{}ren Asmussen \and
        Jevgenijs Ivanovs 
}





\keywords{Brownian motion, conditioning, refraction, regular variation, regulator, scaling limits, self similarity, Skorokhod problem, stable process.}
\subjclass[2010]{Primary 60G51; secondary 60G16, 60G52, 65C05}

\title{Discretization error\\ for a two-sided reflected L\'evy process}




\renewcommand{\theequation}
{\arabic{section}.\arabic{equation}}

\begin{abstract}
	An obvious way to simulate a L\'evy process $X$ is to sample its increments over time $1/n$, thus constructing an approximating random walk $X^\n$. This paper considers the error of such approximation after the two-sided reflection map is applied, with focus on the value of the resulting process $Y$ and regulators $L,U$ at the lower and upper barriers at some fixed time. 
	Under the weak assumption that $X_\ep/a_\ep$ has a non-trivial weak limit for some scaling function  $a_\ep$ as $\ep\downarrow 0$, it is proved in particular that  $(Y_1-Y^\n_n)/a_{1/n}$  converges weakly to~$\pm V$, where the sign depends on the last barrier visited.
Here	the limit $V$ is the same as in the problem concerning approximation of the supremum as recently described by Ivanovs (2017). Some further insight in the distribution of $V$ is  provided both theoretically and numerically.  
\end{abstract}

\maketitle

\section{Introduction}
Consider a non-monotone L\'evy process $X$ and its two-sided reflection with respect to the interval $[0,b]$ and initial position $x$:
\begin{equation}\label{eq:reflection}Y_t=x+X_t+L_t-U_t,\qquad t\geq 0.\end{equation}
Here $(Y,L,U)$ is the solution of the Skorokhod problem, i.e.\ $L$ and $U$ are the unique non-decreasing processes (often called regulators at Lower and Upper barriers) such that 
$Y_t\in[0,b]$ and $\int_{[0,\infty)} \ind{Y_t>0}\D L_t=\int_{[0,\infty)} \ind{Y_t<1}\D U_t=0$. Importantly, the two-sided reflection can be constructed by alternating one-sided reflection at the upper and lower barriers~\cite[Ch.\ XIV.3]{APQ}, and so $-L$ and $U$ evolve locally as the running infimum and supremum of $X$, respectively. In queueing theory the process $Y$ is commonly used to model a queue with a finite buffer, whereas in collective risk theory $U,L$ and $Y$ are often interpreted as cumulative dividends, capital injections and the resulting surplus process, respectively. 

A survey and a comprehensive list of references is given in \cite{LM} with emphasis on steady-state
features such as the stationary distribution and the loss rate $\lim_{t\to\infty}U_t/t$. For a textbook reference on L\'evy-driven queues, see~\cite{debicki_mandjes}.
The focus of this paper is on the distribution of $Y_T$ for a fixed time horizon $T>0$. Without loss of generality we may  (and do) fix $b=1$ and $T=1$, because under appropriate rescaling of time and space the new process $X'_t=X_{Tt}/b$ is again a L\'evy process. 

 Let $X^\n_i=X_{i/n}$ for $i=0,\ldots, n$ be the random walk approximating~$X_t, t\in[0,1]$, which similarly leads to the two-sided reflected (discrete-time) process:
\begin{equation}\label{eq:reflection_discr}Y^\n_i=x+X^\n_i+L^\n_i-U^\n_i,\end{equation}
alternatively described via the recursion
\begin{equation*}Y^\n_i=\max\bigl(0,\min(1,Y^\n_{i-1}+X^\n_i-X^\n_{i-1})\bigr)
\end{equation*}
for $i\geq 1$.
Our main quantity of interest is $\Delta^\n=Y_1-Y^\n_n$, the discretization error corresponding to the distribution of $Y_1$, see Figure~\ref{fig:ex100}. 

\begin{figure}[htb]
	\centering
	\includegraphics[width=\textwidth]{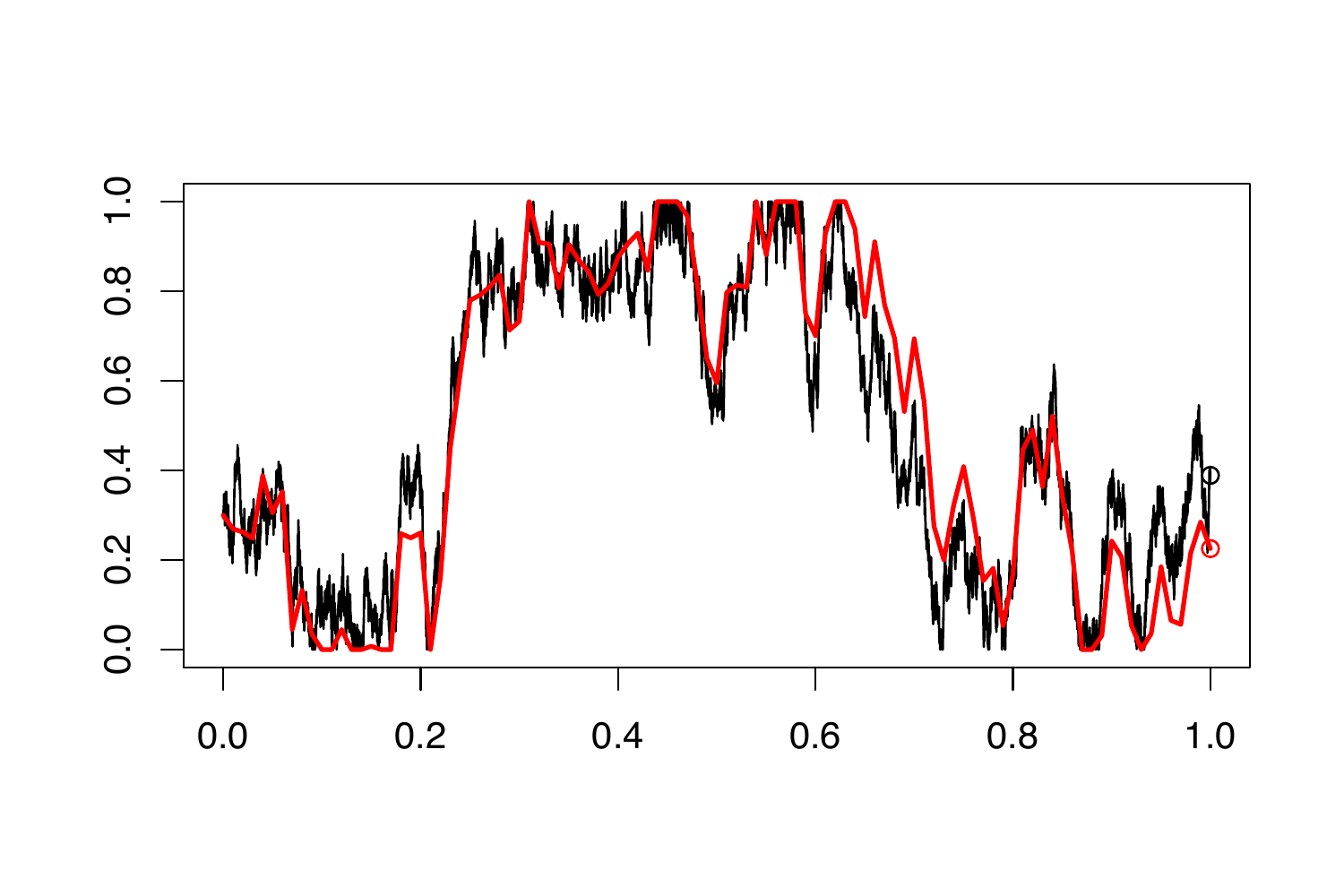}
	\vspace{-1.5cm}
	\caption{Example of a reflected sample path (black) and its discretized version (red)  for $n=100$.}
	\label{fig:ex100}
\end{figure}

A key observation for understanding the impact
of discretization is the formulas
\begin{equation}\label{0107a} X_{i/n}-\inf_{s\le i/n} X_s,\quad X^\n_i-\min_{j\le i } X^\n_j
=X_{i/n}-\min_{j\le i}X_{j/n}\end{equation}
for the one-sided reflected version started at $0$ and its discretized version. They show that  
the discretized version is pushed less away from $0$ and hence will stay below the
continuous time version. For the two-sided problem, explicit formulas expressing $Y,L,U$ in terms of $X$ also exist, see \cite{LM} for a summary,   but they seem too complicated
to be of much practical use. Nevertheless, in view of \eqref{0107a} one may expect for the two-sided reflected process that the sign of the error $\Delta^\n$ for large enough $n$ depends on the last barrier visited.
This is indeed confirmed by Figure~\ref{fig:ex100}. Note, however, that  switching between the barriers may occur with a rather long delay, and for smaller values of $n$ the pattern of switching is likely to be different for the two processes; we return to this point later.

Further intuition is provided by the easy observation that the limiting behaviour of the discretization error must also depend on the local behaviour of the underlying L\'evy process~$X$: the error rate is bigger when $X$ is more volatile. Thus one may expect that processes with a non-zero Brownian component are the worst ones to discretize in terms of the error rate, which is indeed true as explained in Section~\ref{sec:main}. This paper heavily relies on the recent results in~\cite{ivanovs_zooming} concerning discretization error when computing the supremum of a L\'evy process, which in its turn employs the machinery of weak convergence of stochastic processes~\cite{whitt}.
Here we avoid using technical concepts from~\cite{ivanovs_zooming} and mainly focus on the consequences for the intricate two-sided reflection.

Notationally, the event that the lower barrier was visited last will be written as $\{\rho^\n_L>\rho^\n_U\}$, where
\[\rho^\n_L=\min\{i\leq n:L^\n_i=L^\n_n\},\qquad \rho^\n_U=\min\{i\leq n:U^\n_i=U^\n_n\},\] are the last times when $L^\n$ and $U^\n$ increase, respectively. Note that $\rho^\n_L=\rho^\n_U=0$ corresponds to the scenario when reflection has not been applied up to and including step~$n$. 
Finally, let $N^\n$ be the number of switches between the barriers counting the first time when one of the regulators increases. More precisely, it is the largest number $k$ such that there exists a sequence of times $0<i_1<\cdots<i_k\leq n$ at which $L^\n$ or $U^\n$ increase and do so alternatingly. 
We use $\rho_L,\rho_U$ and $N$ to denote the corresponding quantities of the continuous time system. 
In some cases it is possible that the original reflected process touches the barrier and then leaves it without necessitating increase of the relevant regulator; such epochs are not considered as barrier switching epochs.

\section{The main result}\label{sec:main}
The basic assumption underlying our main result is that the following weak convergence holds: 
\begin{equation}\label{eq:zooming}X_{\ep}/a_\ep\stackrel{d}{\to} \X_1\qquad\text{ as }\ep\downarrow 0\end{equation}
for some scaling function $a_\ep>0$ and a random variable $\X_1$, not identically 0, which then must be infinitely divisible~\cite[Thm.\ 15.12(ii)]{kallenberg}. This convergence readily extends to convergence of processes $(X_{t\ep}/a_\ep)_{t\geq 0}\stackrel{d}{\to}  (\X_t)_{t\geq 0}$ on the Skorokhod space~\cite[Thm.\ 16.11]{kallenberg}, which can be seen as zooming-in on the process~$X$.
Necessarily, $\X$ is $1/\a$-self-similar L\'evy process with $\a\in(0,2]$ and then $a_\ep$ is regularly varying at 0 with index ${1/\a}$. Thus there are only the following possibilities for the limit process $\X$: (i) (driftless) Brownian motion with $\a=2$, (ii) linear drift  with $\a=1$ or (iii) strictly $\a$-stable L\'evy process with $\a\in(0,2)$.  A complete characterization of the respective domains of attraction can be found in~\cite{ivanovs_zooming}. 

\begin{remark}
It is noted that~\eqref{eq:zooming} is a weak regularity assumption satisfied for almost every L\'evy processes of practical interest.
In particular, it is always satisfied if the Brownian component is present, i.e.\ $\sigma>0$, irrespective of the L\'evy measure. In this case, one can take $a_\ep=\sigma\sqrt{\ep}$ so that the limit process $\X$ is a standard Brownian motion. Moreover,~\eqref{eq:zooming} is satisfied by processes of bounded variation on compacts with non-zero drift component, in which case $\X$ is a linear drift process. In all the other cases, one needs to look at the behaviour of the L\'evy measure $\Pi(\D x)$ at~0, see~\cite{ivanovs_zooming} for details, and the limit may be any of (i) -- (iii) mentioned above. A sufficient condition is that the functions $\Pi(x,\infty),\Pi(-\infty,-x)$ are regularly varying at 0 with index $-\a$ for $\a\in (0,1)\cup(1,2]$ and their ratio has a limit in $[0,\infty]$.
Finally, assumption~\eqref{eq:zooming} is not satisfied by the following standard classes of L\'evy processes: (a) driftless compound Poisson process (trivial case) and its neighbour (b) driftless variance gamma process; in both cases the functions $\Pi(x,\infty),\Pi(-\infty,-x)$ are slowly varying at~0.
\end{remark}

In~\cite{ivanovs_zooming} it is shown that under the assumption~\eqref{eq:zooming} it holds that
\begin{equation}\label{eq:supremum}\frac{1}{a_{n^{-1}}}\left(\sup_{t\in[0,1]}X_t-\max_{i=0,\ldots, n} X_{i/n}\right)\stackrel{d}{\to}  V>0,\qquad n\to\infty\end{equation}
on the event that the supremum is not achieved at the endpoints of the interval $[0,1]$; the case of a Brownian motion $X$ with drift was analyzed long before in~\cite{agp}. 
The limiting random variable $V$ is defined using the laws of $\X$ `conditioned' to be positive and `conditioned' to be negative and an independent uniform time shift,  see~\cite{ivanovs_zooming} for the exact definition. In particular, when $\X$ is  standard Brownian motion we may take $V=\min_{i\in\mathbb Z}B_{\Upsilon+i}$ as in~\cite{agp}, where $(B_t)_{t\geq 0},(B_{-t})_{t\geq 0}$ are two independent copies of a 3-dimensional Bessel process and $\Upsilon$ is an independent $[0,1]$-uniform random variable.  Importantly, the analogous result concerning discretization error of the infimum holds true, that is, in~\eqref{eq:supremum} we may replace $\sup,\max,V$ by $\inf,\min,-V$, respectively. Some further comments and numerical illustrations concerning the distribution of $V$ are provided in Section~\ref{sec:V}.

With a small abuse of terminology we say that $X$ is regular for $(0,\infty)$ if it enters $(0,\infty)$ immediately, and irregular otherwise. Precise conditions for regularity can be found in, for example,~\cite[Thm.\ 6.5]{kyprianou}. In particular, any $X$ of unbounded variation on compacts is regular for both half lines $(0,\infty)$ and $(-\infty,0)$. Moreover, if $X$ is irregular for one half line then it must be regular for the other unless $X$ is a driftless compound Poisson process, which is excluded from the following by assuming~\eqref{eq:zooming}. The importance of regularity is underlined by the well known fact that $Y_1$ has mass at 0 or at 1 if $X$ is irregular for $(0,\infty)$ or for $(-\infty,0)$, respectively. 
 In addition, it is shown in~\cite{ivanovs_zooming} that 
if $X$ is irregular for $(0,\infty)$ (for $(-\infty,0)$) and~\eqref{eq:zooming} holds, then necessarily $\X$ is decreasing (increasing), and so the limit distribution in~\eqref{eq:supremum} has a simple form: $V\stackrel{d}{=}|\X_\Upsilon|$.
We are now ready to state our main result.

\begin{theorem}\label{thm:main}
Assume that $X$ is regular for both half lines and that~\eqref{eq:zooming} holds. Then as $n\to\infty$ we have that
\begin{align}\label{eq:res1}&\left.\frac{1}{a_{n^{-1}}}\left(Y_1-Y^\n_n\right)\right|\left\{\rho_L^\n>\rho_U^\n\right\}\stackrel{d}{\to}  V,\\
\label{eq:res1b}&\left.\frac{1}{a_{n^{-1}}}\left(Y_1-Y^\n_n\right)\right|\left\{\rho_U^\n>\rho_L^\n\right\}\stackrel{d}{\to}  -V.
\end{align}
Furthermore, for any $k\geq 1$ and $Y_0\in (0,1)$
\begin{align}\label{eq:res2}
\left.\frac{1}{a_{n^{-1}}}\left(L_1-L^\n_n,U_1-U^\n_n\right)\right|&\left\{N^\n=k,\rho_L^\n>\rho_U^\n\right\}\\ &\qquad\stackrel{d}{\to} 
 \left(\sum_{i=1}^kV_i,\sum_{i=1}^{k-1}V_i\right),\nonumber\\
 \label{eq:res2b}\left.\frac{1}{a_{n^{-1}}}\left(L_1-L^\n_n,U_1-U^\n_n\right)\right|&\left\{N^\n=k,\rho_U^\n>\rho_L^\n\right\}\\ &\qquad\stackrel{d}{\to} 
 \left(\sum_{i=1}^{k-1}V_i,\sum_{i=1}^kV_i\right),\nonumber
\end{align}
where the $V_i$ are independent and distributed as $V$. Moreover, the convergence in the above statements is mixing in the sense of R\'enyi~\cite{renyi}, i.e.\ these results hold when additionally conditioning on any event $B$ of positive probability.

If $X$ is irregular for $(0,\infty)$ then the above results hold true if: 
in~\eqref{eq:res1} and in~\eqref{eq:res2} we additionally condition on $Y^\n_n\neq 0$, and in~\eqref{eq:res2} and in~\eqref{eq:res2b} 
we additionally condition on 
\[S^\n=\left\{\not\exists i\in [1,n-1]:Y^\n_{i-1}=0,Y^\n_{i}=1,Y^\n_j<1\, \forall j\in [i+1,n_i]\right\},\]
where $n_i=\min\{j\in[i+1,n]:Y_j^\n=0\text{ or }j=n\}$.

The analogous statement is true in the case when $X$ is irregular for $(-\infty,0)$.
\end{theorem}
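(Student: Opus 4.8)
The plan is to reduce everything to the single-extremum result \eqref{eq:supremum} by decomposing the two-sided reflection into a finite alternating sequence of one-sided reflections and tracking how the discretization error propagates from one excursion to the next. The starting point is the exact identity $Y_1-Y^\n_n=(L_1-L^\n_n)-(U_1-U^\n_n)$, valid because $t=1$ is a grid point, so $X_1=X^\n_n$. Hence \eqref{eq:res1} and \eqref{eq:res1b} follow immediately from \eqref{eq:res2} and \eqref{eq:res2b}: on $\{N^\n=k,\rho_L^\n>\rho_U^\n\}$ the difference of the two limits telescopes to $\sum_{i=1}^kV_i-\sum_{i=1}^{k-1}V_i=V_k\stackrel{d}{=}V$, and symmetrically to $-V$ on the complementary event. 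So the whole theorem rests on establishing \eqref{eq:res2} and \eqref{eq:res2b}.

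For \eqref{eq:res2} I would use the construction of the two-sided map by alternating one-sided reflection at the two barriers (as in \cite[Ch.\ XIV.3]{APQ}). On $\{N=k\}$ the path splits into $k$ maximal excursions, on each of which exactly one regulator is active and grows like the running extremum of the driving increments. Enter the $j$-th excursion with a vertical offset $s_{j-1}=Y^\n-Y$ between the discrete and continuous reflected paths ($s_0=0$). A direct computation with the Skorokhod representation $Y=\tilde Y-\text{push}$ shows two things: first, the discretization error of the regulator increment over a $U$-excursion equals $W_j-s_{j-1}$ and over an $L$-excursion equals $W_j+s_{j-1}$, where $W_j\ge 0$ is the genuine error between the continuous and the grid-sampled extremum of that excursion; second, the \emph{outgoing} offset is $s_j=\pm W_j$ (sign $+$ after a $U$- and $-$ after an $L$-excursion), so that each excursion's error is carried forward and absorbed by the next, active regulator. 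Summing, every $W_j$ with $j<k$ contributes once to the $U$-total and once to the $L$-total (consecutive excursions sit at opposite barriers, so the carried and the native contributions land on different regulators), whereas the terminal $W_k$ enters only the last, active regulator. This yields $U_1-U^\n_n=\sum_{j<k}W_j$ and $L_1-L^\n_n=\sum_{j\le k}W_j$ on $\{N=k,\rho_L>\rho_U\}$, matching the claimed right-hand sides.

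It then remains to pass to the limit. Each $W_j/a_{n^{-1}}$ is, up to localization at the argextremum of its excursion, precisely the supremum/infimum discretization error of \eqref{eq:supremum}, so $W_j/a_{n^{-1}}\stackrel{d}{\to} V_j$; since the argextrema are a.s.\ distinct interior points, the zooming-in picture is read off disjoint shrinking windows and the $V_j$ come out independent. The mixing (Rényi) statement follows because each $W_j$ depends only on the vanishing-scale behaviour of $X$ near its extremal epoch, which is asymptotically independent of any fixed macroscopic event $B$; this is exactly the form in which \cite{ivanovs_zooming} delivers \eqref{eq:supremum}. For the irregular case one invokes that \eqref{eq:zooming} forces $\X$ to be monotone and $V\stackrel{d}{=}|\X_\Upsilon|$; the extra conditioning removes the degenerate configurations in which an excursion extremum or a barrier switch sits at an endpoint and would otherwise create an atom or a spurious $W_j$, while $S^\n$ excludes the anomalous direct $0\to1$ transition. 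The case of $X$ irregular for $(-\infty,0)$ follows by applying the result to $-X$, which swaps the roles of $L$ and $U$.

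The main obstacle is the interchange of conditioning and limit at the level of the \emph{discrete} switching structure. One must show that with probability tending to one the discrete number of switches and the identity of the last barrier agree with their continuous counterparts, i.e.\ $N^\n\to N$ and $\ind{\rho_L^\n>\rho_U^\n}\to\ind{\rho_L>\rho_U}$, so that conditioning on $\{N^\n=k,\rho_L^\n>\rho_U^\n\}$ is asymptotically the same as conditioning on $\{N=k,\rho_L>\rho_U\}$. This is delicate precisely because, as noted after \eqref{0107a}, a switch may occur after a long delay and the discrete path can switch at a slightly different epoch; one needs that excursions reaching a barrier do so strictly (overshoot bounded away from $0$) so that the finitely many switches are stable under grid sampling, together with uniform tightness of the offsets $s_j$ to guarantee that the propagation identity holds up to $o(a_{n^{-1}})$ errors. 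Controlling this stability, and the joint (rather than merely marginal) convergence of $(W_1,\dots,W_k)$ together with the negligibility of the reflection applied during the inter-excursion passages, is where the real work lies.
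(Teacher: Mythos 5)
Your overall architecture coincides with the paper's: the identity $Y_1-Y^\n_n=(L_1-L^\n_n)-(U_1-U^\n_n)$, the reduction of \eqref{eq:res1}--\eqref{eq:res1b} to \eqref{eq:res2}--\eqref{eq:res2b} by telescoping, the decomposition at the barrier-switching epochs $\tau_i$ with the carried-forward offset (the paper phrases this as a shift in the ``effective height'' of the next barrier), and the replacement of the discrete conditioning event by its continuous counterpart via $\p(B^\n_k\bigtriangleup B_k)\to 0$. That part is sound.

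The genuine gap is in the passage to the limit of $(W_1,\dots,W_k)$. You assert that since the argextrema are distinct interior points, ``the zooming-in picture is read off disjoint shrinking windows and the $V_j$ come out independent.'' But the intervals $[\tau_{i-1},\tau_i\wedge 1]$ have \emph{random} endpoints determined by the entire path of $X$, so the result \eqref{eq:supremum} (stated for the deterministic interval $[0,1]$) does not apply to them directly, and independence of increments cannot be invoked across intervals whose endpoints are themselves path functionals. This is precisely the content of the paper's Lemma~\ref{lem:zooming}, which you neither state nor prove. The paper's argument is: first note that the proof of \eqref{eq:supremum} actually yields \eqref{eq:sup2} for deterministic endpoints and arbitrary positive-probability conditioning events (this gives both the joint independence, via independent increments over disjoint deterministic intervals, and the R\'enyi-mixing property); then handle random endpoints by restricting to the event $B_\delta$ on which each extremum is attained at distance at least $\delta$ from the endpoints, replacing $[\rho_i,\tau_i]$ by the slightly smaller intervals $[s_i\delta,t_i\delta]$ with integer $s_i,t_i$, conditioning on the values of these finitely many integers to reduce to the deterministic case, and letting $\delta\downarrow 0$. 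Without this (or an equivalent) argument, the joint convergence and the mixing claim in your write-up rest on a heuristic. Relatedly, your identification of ``the main obstacle'' as the stability of the discrete switching structure is somewhat misplaced: that step is dispatched by continuity of the one-sided reflection maps in the Skorokhod topology, whereas the random-interval lemma is where the work is.

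A minor further inaccuracy: in the irregular case the purpose of conditioning on $S^\n$ is not to remove an atom or a ``spurious $W_j$,'' but to exclude the configuration in which a single epoch $\tau_i$ is a point of increase of both $L$ and $U$ (a jump across the whole strip) with $U$ not increasing again before $L$; on that event the limit of $(V^\n_i,V^\n_{i+1})$ is $(-\X_\Upsilon,-\X'_{1-\Upsilon})$ with a \emph{common} $\Upsilon$, so the marginals are still distributed as $V$ but the independence asserted in \eqref{eq:res2} fails.
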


Let us provide some comments concerning Theorem~\ref{thm:main}. 
Firstly, for the regulators $L$ and $U$ the discretization error accumulates as the number of switches between the barriers grows, see~\eqref{eq:res2} and~\eqref{eq:res2b} . These additional errors, however, cancel out when computing~$\Delta^\n$, see~\eqref{eq:reflection} and~\eqref{eq:reflection_discr}. Intuitively, this error regenerates at every barrier switching epoch. 

Importantly, the events upon which we condition can be modified into the corresponding events for the original process without affecting the result. That is, in~\eqref{eq:res1} we may instead condition on $\{\rho_L>\rho_U\}$,  and on $\{\rho_L>\rho_U,Y_1\neq 0\}$ when $X$ is irregular for $(0,\infty)$. 
The event $S$ corresponding to $S^\n$ asserts that there does not exists a time $t\in(0,1)$ such that $t$ is a point of increase of both $L$ and $U$ and that following this time $U$ does not increase before $L$ does.
In other words, we need to exclude the possibility that $Y$ jumps from the lower barrier to the upper barrier (necessitating increase of $U$) and then hits the lower barrier again (or the time runs out), because in this case there will be a certain dependence between $V_i$'s, see the proof for details.
When $X$ is regular for $(0,\infty)$ then $\{Y_1\neq 0\}$ and $S$ hold with probability 1 and so the additional conditioning is not required. 
Also if $X$ is irregular for $(0,\infty)$ and its positive jumps are bounded by~1 then $S$ holds, and again there is no need for extra conditioning on~$S^\n$. 

Observe that~\eqref{eq:res1} and~\eqref{eq:res1b} cover the cases when a barrier was hit and the process at its terminal value is away from the barrier. If this is not the case then the discretization scheme employed does not produce any error with respect to $Y_1$ for large enough~$n$. 
Furthermore, adding more information does not affect the limiting error distribution, since the convergence is mixing.
 Finally, the event $\{N=k,\rho_L>\rho_U\}$ has 0 probability for $k$ even (odd) when $Y_0=0$ ($Y_0=1$), and this is the only reason to exclude $Y_0\in\{0,1\}$ in the statements~\eqref{eq:res2} and~\eqref{eq:res2b}.

\begin{remark}
	Theorem~\ref{thm:main} readily extends to the setting of the so-called two-sided refraction of $X$, see~\cite{power_identities}. Here refraction from above at rate $\gamma_U\in[0,1]$ in risk theory has the interpretation of taxation at rate $\gamma_U$ according to a loss-carry-forward scheme.
\end{remark}

Importantly, Theorem~\ref{thm:main} provides a way to improve the simulation results obtained through discretization. 
In particular, relying on the fact that the convergence in~\eqref{eq:res1} and~\eqref{eq:res1b} is mixing, we propose the following procedure to rectify the samples of $Y^\n_n$ in the case when $X$ is regular for both half lines.
\begin{Algorithm}\label{alg}
	\hfill
\begin{itemize}
	\item simulate $(Y_n^{(n)},\rho_L^{(n)},\rho_U^{(n)})$,
	\item add independent realizations of $a_{n^{-1}} V$ to those of $Y_n^{(n)}$ with $\rho_L^{(n)}>\rho_U^{(n)}$
	\item add independent realizations of $-a_{n^{-1}} V$ to those of $Y_n^{(n)}$ with $\rho_L^{(n)}<\rho_U^{(n)}$
	\item return the rectified realizations of $Y_n^{(n)}$.
\end{itemize}
\end{Algorithm}
Regularity for both half lines implies that conditioning on $Y^\n_n$ being away from the boundaries is not needed.
This extra conditioning is irrelevant for the limit theory, but makes a difference when using moderate~$n$. Furthermore, the simulated realizations produced by Algorithm~\ref{alg} may lie outside of $[0,1]$, and so further post-processing should be considered.
In Section~\ref{sec:numerics} we provide some first numerical results demonstrating effectiveness of the proposed algorithm.

\section{Proof of  the main result}
Our proof of Theorem~\ref{thm:main} relies on the following generalization of~\eqref{eq:supremum}. 
\begin{lemma}
\label{lem:zooming}
Assume~\eqref{eq:zooming} and consider two random times $0\leq \rho<\tau<\infty$.
Then it holds that
\begin{equation}\label{eq:sup2}\left.\frac{1}{a_{n^{-1}}}\left(\sup_{t\in[\rho,\tau]} X_t-\max_{i=\lceil\rho n\rceil,\ldots, \lfloor \tau n\rfloor} X_{i/n}\right)\right|B\stackrel{d}{\to}  V, \qquad\text{as }n\to\infty\end{equation}
for any positive probability event $B\in\mathcal F$ such that on $B$ the supremum is achieved strictly inside the interval $[\rho,\tau]$, i.e.
$B\subset\{\sup_{t\in[\rho,\tau]} X_t>X_{\rho}\vee X_{\tau-}\vee X_{\tau}\}$.

The same limit is obtained if $\sup-\max$ is replaced by $-\inf+\min$ assuming that on $B$ the infimum is achieved strictly inside the interval $[\rho,\tau]$.

Moreover, for $k$ random intervals $[\rho_i,\tau_i]$ such that $0\leq\rho_1<\tau_1\leq \cdots\leq \rho_k<\tau_k<\infty$ there is the joint convergence of the corresponding rescaled errors to the vector $(V_1,\ldots,V_k)$ with independent components, where on $B$ the suprema/infima are achieved strictly inside the respective intervals. 
\end{lemma}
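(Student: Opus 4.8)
The plan is to reduce the lemma to the fixed-interval statement \eqref{eq:supremum}, exploiting that the discretization error of a supremum is a purely local quantity concentrated near the argmax. First I would record the localization principle: on $B$ the supremum over $[\rho,\tau]$ is attained at an interior time, say $t^*$, and for large $n$ the error $\sup_{t\in[\rho,\tau]}X_t-\max_i X_{i/n}$ depends, up to a negligible remainder, only on the restriction of $X$ to a fixed small neighbourhood $(t^*-\delta,t^*+\delta)$. Indeed, since the supremum is attained strictly inside the interval, away from $t^*$ the continuous supremum strictly dominates the path, so the maximizing grid point necessarily falls within $o(1)$ of $t^*$; this is precisely where the interior assumption encoded in $B$ enters.

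I would then treat deterministic endpoints $0\le\rho<\tau$. Because $X$ has stationary and independent increments, the argument behind \eqref{eq:supremum} is translation-invariant, so zooming in around $t^*$ at spatial scale $a_{1/n}$ and temporal scale $1/n$ reproduces exactly the same local picture: by \eqref{eq:zooming} the rescaled process converges to $\X$, the grid rescales to $\mathbb Z+\Upsilon$ with $\Upsilon$ asymptotically $[0,1]$-uniform (capturing the misalignment between the argmax and the grid), and conditioning on the pre- and post-maximum excursions yields the copies of $\X$ conditioned to be negative and positive that define $V$. The rescaled error therefore converges to $V$ just as in \eqref{eq:supremum}, and identically for $-\inf+\min$ by symmetry.

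To pass to random times and to the conditioning on $B$, I would invoke a stable-convergence (R\'enyi mixing \cite{renyi}) argument. The crucial structural fact is that the rescaled error is determined by $X$ only through its increments in a neighbourhood of $t^*$ shrinking at rate $1/n$; as these increments live at the vanishing spatial scale $a_{1/n}$, they decouple asymptotically from the macroscopic path, and hence from any fixed event $B\in\mathcal F$ as well as from the measurable values of $\rho$ and $\tau$. Concretely I would approximate $\rho,\tau$ by simple random variables taking finitely many values, apply the deterministic case on each atom, and then remove the approximation, using the mixing property to guarantee that further conditioning on $B$ leaves the limit $V$ unchanged. For the joint statement the argmaxes $t^*_1,\ldots,t^*_k$ lie in disjoint intervals, and since each is strictly interior they are separated by positive gaps even when $\tau_i=\rho_{i+1}$; the associated errors are thus governed by increments of $X$ over disjoint shrinking neighbourhoods, so independence of increments makes the limits $V_1,\ldots,V_k$ independent.

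The main obstacle is the stable/mixing step: rigorously decoupling the local fluctuation near the argmax both from the global path and from the random endpoints. This amounts to a conditional version of \eqref{eq:zooming} along the globally determined argmax $t^*$, showing that the $\mathcal F$-conditional law of the rescaled local error converges to the fixed law of $V$; the delicacy is that $t^*$ is itself a functional of the whole path. I expect this to go through exactly as the analogous mixing assertion established in \cite{ivanovs_zooming}, which is why the lemma is framed as a generalization of \eqref{eq:supremum} rather than proved ab initio.
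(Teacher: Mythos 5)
Your proposal is correct and follows essentially the same route as the paper: take the deterministic-interval, mixing version of \eqref{eq:supremum} from \cite{ivanovs_zooming} as the base case, reduce random endpoints to deterministic ones by approximating $\rho_i,\tau_i$ with finitely-valued (grid) times on an event where the argmax stays a fixed distance $\delta$ inside the interval, and obtain joint independence from independence of increments over disjoint intervals. The paper's proof is just a compressed version of this, using $s_i=\lceil\rho_i/\delta\rceil$, $t_i=\lfloor\tau_i/\delta\rfloor$ and the event $B_\delta$ in place of your ``simple random variable'' approximation.
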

\begin{proof}
The proof of~\eqref{eq:supremum} in~\cite{ivanovs_zooming} shows, in fact, that~\eqref{eq:sup2} holds for deterministic $\rho,\tau$ and any event $B$ of positive probability. 
If all the times $\rho_i,\tau_i$ are deterministic then the conclusion of the lemma follows from the fact that $X$ has independent increments.

Pick $\delta>0$ and consider the event that for all $i$ the supremum over $[\rho_i,\tau_i]$ is achieved in $[\rho_i+\delta,\tau_i-\delta]$; the intersection of this latter event and $B$ will be denoted by~$B_\delta$.
Note that $\p(B\backslash B_\delta)\to 0$ as $\delta\downarrow 0$ and hence it is enough to show the claimed convergence on the event $B_\delta$ for all $\delta>0$ assuming it has positive probability. Now consider (random) integers $s_i=\lceil\rho_i/\delta\rceil,t_i=\lfloor \tau_i/\delta\rceil$ and note that the supremum over $[\rho_i,\tau_i]$ must be achieved strictly inside $[s_i\delta,t_i\delta]\subset[\rho_i,\tau_i]$ on $B_\delta$. Note also that the limit result does not change if we restrict ourselves to the intervals $[s_i\delta,t_i\delta]$. Finally, it is left to condition on the values of all $s_i,t_i$ and to apply the result for deterministic times.
\qed
\end{proof}


\begin{proof}[of Theorem~\ref{thm:main}] 
In view of~\eqref{eq:reflection} and~\eqref{eq:reflection_discr} we obtain
\[Y_1-Y_n^\n=(L_1-L^\n_n)-(U_1-U_n^\n),\]
because our discretization scheme implies that $X^\n_n=X_1$.
Thus it is sufficient to establish~\eqref{eq:res2}, since~\eqref{eq:res2b} follows similarly, and these two results readily imply~\eqref{eq:res1} and~\eqref{eq:res1b}, respectively.

It is well known~\cite[Eq.\ (2.2.13)]{jacod_protter} that the discretized process $X^\n_{\lfloor tn\rfloor}=X_{\lfloor tn\rfloor/n}$ converges to $X_t$  in the Skorokhod topology for every sample path. Moreover, the processes $Y^\n_{\lfloor tn\rfloor},L^\n_{\lfloor tn\rfloor},U^\n_{\lfloor tn\rfloor}$ converge to $Y_t,L_t,U_t$, respectively, on the level of sample paths, which can be deduced from the fact that the running supremum and one-sided reflection maps are continuous~\cite[Thm.\ 13.4.1/13.5.1]{whitt}. 
Furthermore, letting 
\[B^\n_k=\{N^\n=k,\rho_L^\n>\rho_U^\n\}, \qquad B_k=\{N=k,\rho_L>\rho_U\}\] we observe that $\p(B^\n_k\bigtriangleup B_k)\to 0$ as $n\to\infty$,
and so it is sufficient to prove~\eqref{eq:res2} when conditioning on the event $B_k$ instead of $B_k^\n$.
In the following, without real loss of generality, we also assume that the lower barrier is hit first, that is, $L$ becomes positive before $U$, and so we only consider events $B_k$ for $k$ odd.

Let $\tau_i$ for $i\leq k$ be the barrier switching epochs for the original process where $\tau_0=0$. More precisely, we define
\[\tau_1=\inf\{t>0:x+X_t+L_t> 1\}=\inf\{t>0:U_t>0\}\]
to be the first time when the upper barrier is hit by the process reflected at the lower barrier. 
Similarly define $\tau_2=\inf\{t>0:L_t>L_{\tau_1}\}$, $\tau_3=\inf\{t>0:U_t>U_{\tau_2}\}$ and so on.
Note that on $B_k$ we must have $\{\tau_{k-1}<1<\tau_k\}$ a.s.
Additionally to the above, we may assume that for $i$ odd there is an increase of $L^\n_j$ but none of $U^\n_j$ for $j=\lceil \tau_{i-1}n\rceil,\ldots \lfloor \tau_{i}n\rfloor$ and vice versa for $i$ even; this must happen with probability tending to 1 as $n\to\infty$.

By splitting the sample path into the intervals $[\tau_{i-1},\tau_i\wedge 1),i=1,\ldots,k$, we find under the above assumptions that 
\[L_1-L^\n_n=\sum_{i=1}^k V^\n_i,\qquad U_1-U^\n_n=\sum_{i=1}^{k-1} V^\n_i,\] where 
\begin{align*}
V^\n_i=\begin{cases}-\inf_{t\in[\tau_{i-1},\tau_i\wedge 1]} X_t+\min_{j=\lceil\tau_{i-1}n\rceil,\ldots,\lfloor(\tau_{i}\wedge 1)n\rfloor}X^\n_j,&i\text{ is odd},\\
\sup_{t\in[\tau_{i-1},\tau_i\wedge 1]} X_t-\max_{j=\lceil\tau_{i-1}n\rceil,\ldots,\lfloor(\tau_{i}\wedge 1)n\rfloor}X^\n_j,&i\text{ is even}.
\end{cases}
\end{align*} 
The main observation here is that the error of $L$ (of $U$) accumulated in each interval for $i$ odd (even) is composed of two parts: (a) the error of discretization assuming the discretized process starts from the respective barrier and (b) the difference in effective height of the barriers which comes from part (a) of the previous interval (it is 0 when $i=1$).
In particular, in the 1st interval the error for $L$ is indeed given by~$V_1^\n$, which implies that for the 2nd interval the upper barrier is effectively $V_1^\n$ units 
lower for continuously observed process. Hence the error for $U$ accumulated in the 2nd interval is given by $V_1^\n+V_2^\n$. For the 3rd interval the lower barrier is effectively $V_2^\n$ units higher for continuously observed process, and so the error for $L$ accumulated in the 3rd interval is $V_2^\n+V_3^\n$, which should be added to $V_1^\n$ accumulated in the 1st interval, and so on. Finally, note that $V_k^\n$ does not contribute to the error of $U$.

Regularity of $X$ for both half lines implies that infima and suprema in the intervals $[\tau_{i-1},\tau_i\wedge 1]$ are not achieved at the end points.
The proof is now complete in this case upon invoking Lemma~\ref{lem:zooming}. 

The case when $X$ is irregular for $(0,\infty)$ requires additional care. In this case it is possible that the infimum and supremum are achieved at the right and left end points of the intervals, respectively, but excluding times 0 and 1 (we assume that $Y_1\neq 0$ when $\rho_L>\rho_U$).
This does not create any problem with respect to $\Delta^\n$ which is the limit of $V_k^\n$, because we can extend the interval $[\tau_{k-1},1]$ to the left.
Similar reasoning goes through with respect to the errors for $L$ and $U$, unless these errors have a contribution from the same time $t=\tau_i$, i.e.~such $t$ is a point of increase of both $L$ and $U$ (in particular there is a jump up of size larger than 1) and that $U$ does not increase on $(\tau_i,\tau_{i+1}\wedge 1]$. On the latter event the limit of $(V_i^\n,V_{i+1}^\n)$ can be shown to be $(-\X_\Upsilon,-\X'_{1-\Upsilon})$, where $\X,\X',\Upsilon$ are independent and the former two have the same law (of a decreasing process). This means that the marginals have the law of $V$ but fail to be independent.
Finally, the complement of the above event is denoted by $S$ and one can show that $\p(S^\n\bigtriangleup S)\to 0$ as $n\to\infty$, which concludes the proof.  
\qed
\end{proof}



\section{Numerical illustration}\label{sec:numerics}
In this section we provide a numerical illustration of Theorem~\ref{thm:main} and the related Algorithm~\ref{alg} using a Brownian motion $X$ with  variance  and drift parameters $\sigma^2=2$ and $\mu=-1/2$, respectively. As pointed out in Section~\ref{sec:main} we may choose $a_\ep=\sigma\sqrt{\ep}$ so that $\X$ is standard Brownian motion, and then $V$ is obtained from two independent copies of a 3-dimensional Bessel process. In this work we use a  straightforward procedure to simulate $V$ based on sampling both copies at 300 locations, whereas an algorithm for exact simulation is given in~\cite{agp} which also provides a formula for~$\e V$.

We fix the initial position $x=0.3$ and choose $n=100$, so that the increments of the approximating random walk are distributed according to $\mathcal N(\mu/n,\sigma^2/n)$, see Figure~\ref{fig:ex100}. In fact, we first use $n'=50000$ increments and then accumulate them into $n=100$ increments hoping that the former discretization provides a very good approximation of $Y_1$. This approximation is slightly improved by adding and subtracting the constant $ a_{1/n'} \e V\approx 0.0037$ when $\rho^{(n')}_L>\rho^{(n')}_U$ and $\rho^{(n')}_U>\rho^{(n')}_L$, respectively, see again~\eqref{eq:res1} and~\eqref{eq:res1b}. Next, we simulate 20000 (approximate) realizations of the discretization error $\Delta^{(n)}$ and pick those with ${\rho_L^{(n)}>\rho_U^{(n)}}$ (about 60\%). The respective conditional distribution is compared to the distribution of $a_{n^{-1} }V$ suggested by the limit result in~\eqref{eq:res1}, see Figure~\ref{fig:err100}. As can be seen from Figure~\ref{fig:ex100}, the chosen discretization is rather coarse. Nevertheless, the corresponding discretization error is well captured by the limit result, apart from some mass on $(-\infty,0)$. This latter error comes from the failure to detect the last barrier being active, which is also demonstrated by Figure~\ref{fig:err100} where 'error adjusted' is obtained by further removing the realizations with $\rho_U>\rho_L$. In practice, the latter information is not available, and so one will need to increase~$n$. 
	\begin{figure}[htb]
		\centering
		\includegraphics[width=0.9\textwidth]{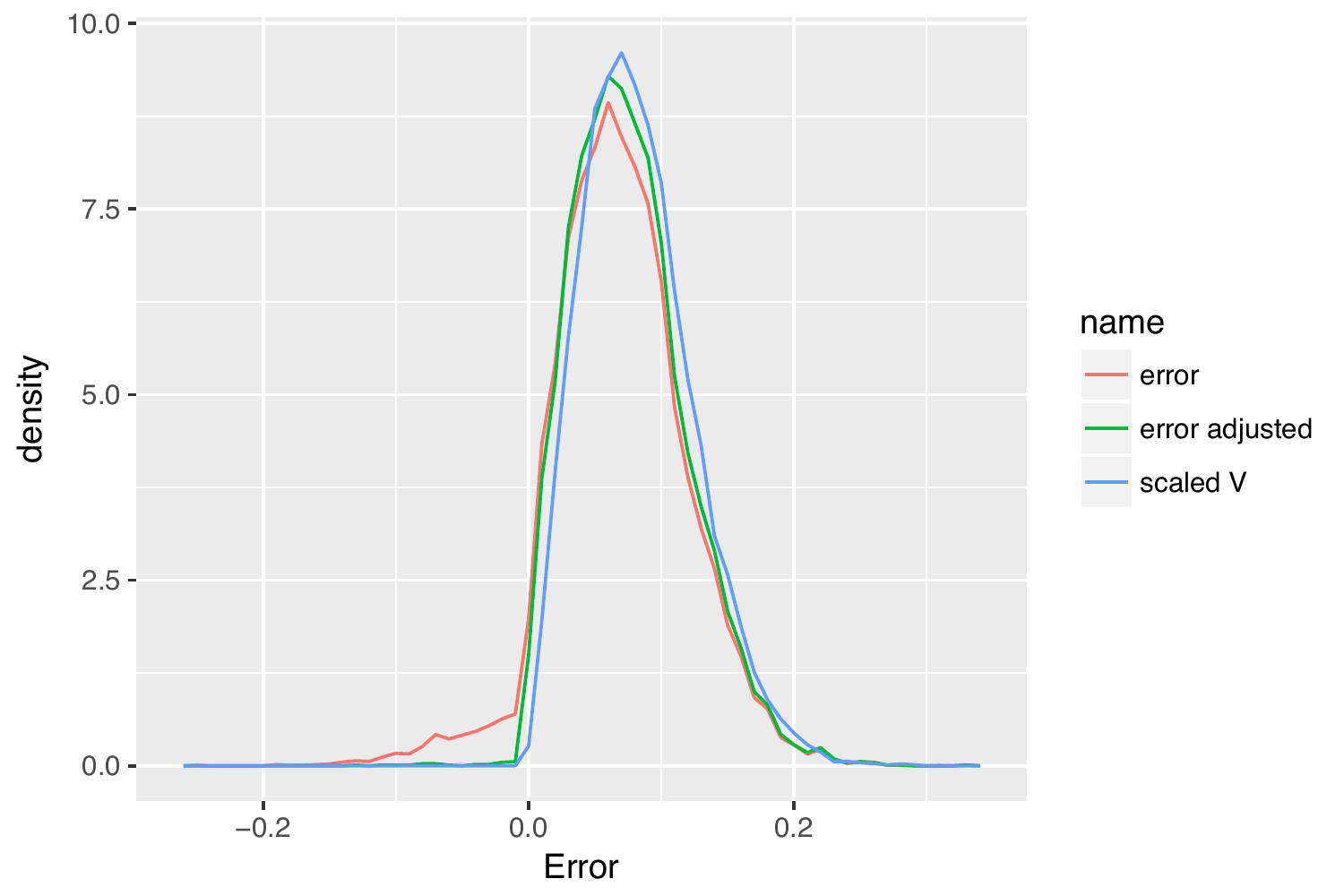}
		\caption{Distribution of $\Delta^\n\left|\{\rho_L^{(n)}>\rho_U^{(n)}\}\right.$ and its approximation $a_{n^{-1} }V$ for $n=100$.}
		\label{fig:err100}
	\end{figure}
Finally, Figure~\ref{fig:rec100} demonstrates the effectiveness of Algorithm~\ref{alg} based on the limit result in Theorem~\ref{thm:main}. 
\begin{figure}[htb]
	\centering
	\includegraphics[width=0.9\textwidth]{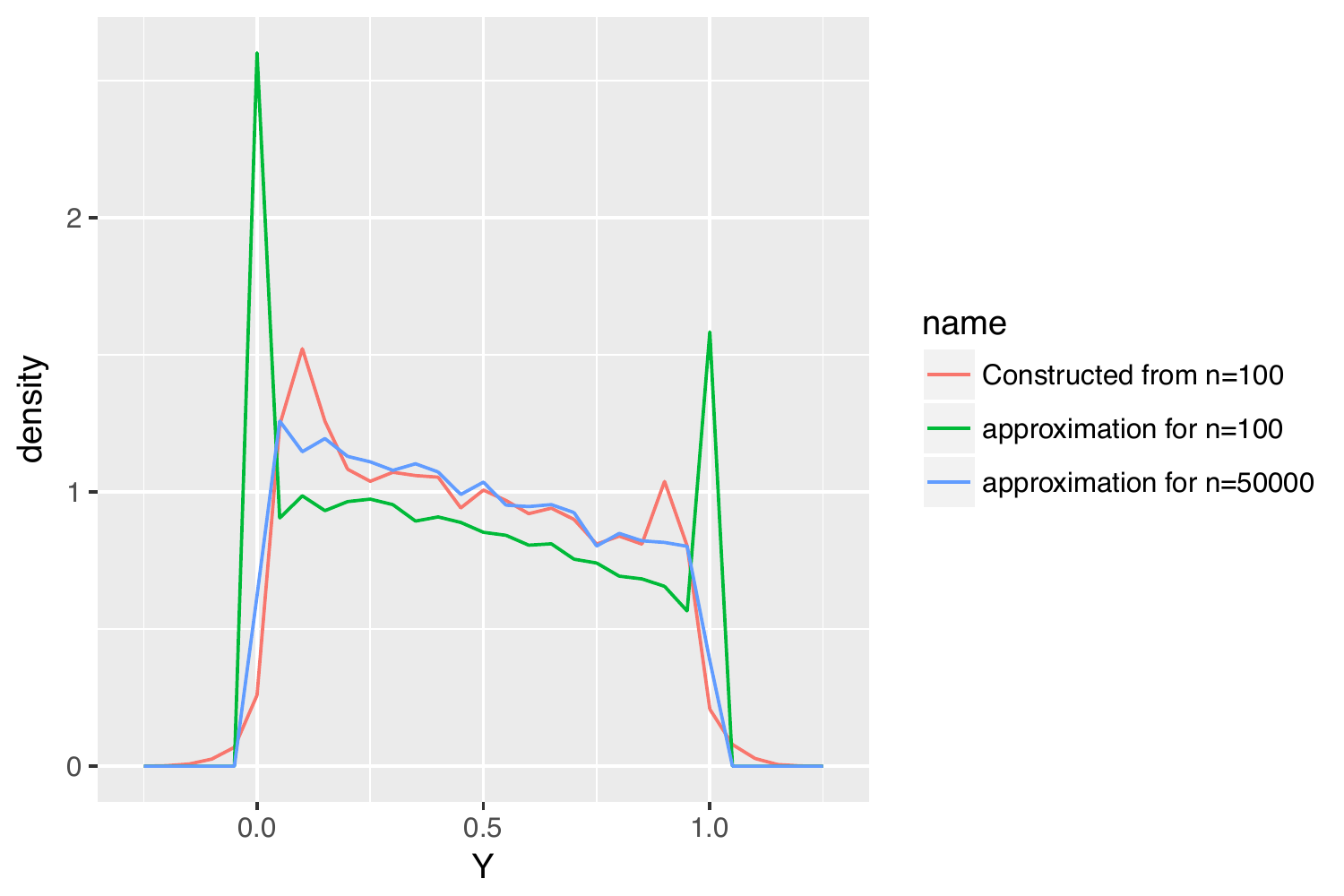}
	\caption{Approximation of $Y_1$. The red line corresponds to Algorithm~\ref{alg} with $n=100$.}
	\label{fig:rec100}
\end{figure}

\section{On the distribution of $V$}\label{sec:V}
Recall from~\cite{ivanovs_zooming} that the  
distribution of $V$ is defined in terms of the laws of $\X$ conditioned to be positive and conditioned to be negative and an independent uniform time shift, where the self-similar process $\X$ is one of the following: (i) standard Brownian motion, (ii) linear drift, (iii) strictly $\a$-stable process.
In the following we exclude monotone self-similar processes $\X$, i.e.\ linear drift processes and one-sided strictly $\a$-stable processes for $\a\in(0,1)$, since in this case we trivially have $V\stackrel{d}{=}|\X_\Upsilon|$ for an independent $[0,1]$-uniform~$\Upsilon$.
Moreover, the case (i) is addressed in~\cite{agp} and so we mainly concentrate on the case (iii) formed by a two-parameter family of strictly stable processes; the scale parameter is fixed, see also Proposition~\ref{Prop:0107a}. 

Recall that $\X$ arises as a limit in~\eqref{eq:zooming}. Importantly, $\X$ must be in its own domain of attraction: $\X_\ep/\aa_\ep\stackrel{d}{\to}  \X_1$, where according to~\cite[Thm.\ 2]{ivanovs_zooming} we may take $\aa_\ep=\ep^{1/\a}.$ Thus according to~\eqref{eq:supremum} the corresponding $V$ may also be obtained from
\begin{equation}\label{eq:V}
V_n\ =\ n^{1/\a}\left(\sup_{t\in[0,1]}\X_t-\max_{i=0,\ldots, n} \X_{i/n}\right)\stackrel{d}{=}\sup_{t\in[0,n]}\X_t-\max_{i=0,\ldots, n} \X_{i}\stackrel{d}{\to}  V
\end{equation}
as $n\to\infty$, where the equality in distribution follows from self-similarity of~$\X$.
Importantly, non-monotone self-similar $\X$ is regular for both half lines $(0,\infty)$ and $(-\infty,0)$ and so the supremum can not be achieved at the endpoints of the interval~$[0,1]$.
Furthermore, writing explicitly the dependence of $V$ on the law of the underlying process $\X$ we have:
\begin{proposition}\label{Prop:0107a}
$V_{\mathcal L(-\X)}\stackrel{d}{=}V_{\mathcal L(\X)}$ and for any $c>0$: $V_{\mathcal L(c\X)}\stackrel{d}{=}cV_{\mathcal L(\X)}$.
\end{proposition}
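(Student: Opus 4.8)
The plan is to prove both symmetry statements directly from the characterization of $V$ as a limit in~\eqref{eq:V}, exploiting only self-similarity of $\X$ and the distributional invariance of the supremum/infimum discretization error. The key point I would stress up front is that the passage to the limit in~\eqref{eq:V} is along a sequence of honest random variables, so any distributional identity I can verify at the level of the pre-limit quantities $V_n$ will transfer to the limit $V$ by uniqueness of weak limits.

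First I would treat the scaling identity, which is the more routine of the two. Fix $c>0$ and let $\X$ be the given strictly $\a$-stable process. Writing $V_n^{\mathcal L(\X)}=\sup_{t\in[0,n]}\X_t-\max_{i=0,\ldots,n}\X_i$ for the pre-limit quantity in~\eqref{eq:V}, I observe that replacing $\X$ by $c\X$ multiplies both the continuous supremum and the discrete maximum by the deterministic constant $c$, since $c>0$ preserves order. Hence $V_n^{\mathcal L(c\X)}=c\,V_n^{\mathcal L(\X)}$ for every $n$, as an identity between random variables on the same probability space. Letting $n\to\infty$ and using~\eqref{eq:V} together with the fact that multiplication by the fixed constant $c$ is continuous (so $cV_n\stackrel{d}{\to}cV$), I conclude $V_{\mathcal L(c\X)}\stackrel{d}{=}cV_{\mathcal L(\X)}$.

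Next I would handle the reflection identity $V_{\mathcal L(-\X)}\stackrel{d}{=}V_{\mathcal L(\X)}$. Here the crucial input is the second paragraph of Lemma~\ref{lem:zooming} (equivalently the remark following~\eqref{eq:supremum}): the discretization error of the infimum, namely $-\inf+\min$, has the very same limit law $V$ as the discretization error of the supremum. Applying the map $x\mapsto-x$ turns the supremum of $-\X$ into minus the infimum of $\X$ and the discrete maximum of $-\X$ into minus the discrete minimum of $\X$, so that $V_n^{\mathcal L(-\X)}=-\inf_{t\in[0,n]}\X_t+\min_{i=0,\ldots,n}\X_i$, which is exactly the infimum-type pre-limit error for $\X$. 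Passing to the limit and invoking the infimum version of~\eqref{eq:V} gives $V_{\mathcal L(-\X)}\stackrel{d}{=}V_{\mathcal L(\X)}$.

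I expect the only genuine subtlety to be bookkeeping about the endpoints and the legitimacy of the limit interchange, rather than any deep difficulty. Specifically, one must make sure that the suprema/infima are a.s.\ attained in the interior so that the limit results of~\eqref{eq:V} and Lemma~\ref{lem:zooming} apply; but this is already guaranteed by the remark in the text that a non-monotone self-similar $\X$ is regular for both half lines, and regularity is manifestly preserved under both $\X\mapsto c\X$ and $\X\mapsto-\X$. Thus the main obstacle is merely to phrase the two elementary pathwise identities correctly and to note that the continuous-mapping argument for the constant-multiple case is immediate.
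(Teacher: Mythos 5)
Your proof is correct and follows essentially the route the paper itself indicates: the scaling identity is immediate from the pathwise form of $V_n$ in~\eqref{eq:V}, and the sign-flip identity is exactly the ``last comment following~\eqref{eq:supremum}'' (the infimum error converges to $-V$) that the paper's one-line proof points to as an alternative to using the conditioned-laws representation of $V$. You simply spell out in detail what the paper declares obvious.
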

\begin{proof}
The last statement is obvious and the first readily follows from the representation of $V$
(see, however, also the last comment following~\eqref{eq:supremum}).
\qed
\end{proof}

Following the approach of~\cite{agp}, and additionally relying on self-similarity of~$\X$, we may identify the limit of $\e V_n$.  
It is assumed here that $\a\in(1,2]$ since otherwise $\e V_n=\infty$.
\begin{proposition}\label{prop:spitzer}
Consider the $1/\alpha$-self-similar L\'evy process $\X$ for $\alpha\in(1,2]$. It holds that
\[\e V_n\to-\zeta\left(\frac{\alpha-1}{\alpha}\right)\e \X^+_1,\]
where $\zeta$ is the Riemann zeta function and $\e \X^+_1$ for $\a\in(1,2)$ is given in~\eqref{eq:zolotarev} below.
\end{proposition}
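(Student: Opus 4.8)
The goal is to compute $\lim_{n\to\infty}\e V_n$ where $V_n=\sup_{t\in[0,n]}\X_t-\max_{i=0,\ldots,n}\X_i$, exploiting self-similarity of $\X$. The natural strategy, following~\cite{agp}, is to write $V_n$ as a telescoping sum over the unit intervals $[i,i+1]$ and take expectations. The key identity I would aim for is
\[
\e V_n=\sum_{i=0}^{n-1}\e\Bigl(\sup_{t\in[i,i+1]}\X_t-\X_i\Bigr)^+ \cdot(\text{something involving a Spitzer-type weighting}),
\]
but more precisely the Spitzer--Baxter / discrete-continuous comparison from~\cite{agp} should give that the expected gap between the continuous supremum and the sampled maximum, over $[0,n]$, decomposes into independent single-step overshoot contributions reweighted by the probability that the overall maximum falls in a given cell. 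First I would recall the exact combinatorial identity used in~\cite{agp} for Brownian motion and check that it depends on the driving process only through the one-step quantity $\e\bigl(\sup_{t\in[0,1]}\X_t-\X_0\bigr)^+$ together with the Spitzer weights $\p(\text{random walk stays below }0)$.

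\medskip

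The central computation is to identify these Spitzer weights in closed form using the self-similarity (hence scale invariance) of the sampled walk $(\X_i)_{i\ge0}$. Because $\X$ is $1/\a$-self-similar and strictly stable, the increments are i.i.d.\ stable, and the relevant persistence probabilities $\p(\X_1<0,\ldots,\X_m<0)$ admit a universal form. The crucial input I would invoke is Spitzer's formula: the generating function $\sum_{m\ge1}\frac{s^m}{m}\p(\X_m>0)$ controls the expected maximum, and for a strictly stable process the positivity probability $\p(\X_m>0)$ equals the constant $\rho=\p(\X_1>0)$ for \emph{every} $m$ by self-similarity. This is the decisive simplification: the Spitzer weights collapse to a single geometric-type expression, and the sum over cells becomes a classical series. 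I expect the combination $\sum_{m\ge1}\frac{1}{m}\,m^{-(1-1/\a)}$ or a closely related Dirichlet series to emerge, which is precisely $\zeta\bigl(\frac{\a-1}{\a}\bigr)$ after reindexing; tracking the exponent $\frac{\a-1}{\a}$ carefully against the scaling $n^{1/\a}$ is what produces the stated zeta value.

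\medskip

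Having reduced $\e V_n$ to a single-step constant times a partial sum of a zeta series, I would take $n\to\infty$ and identify the limit. The prefactor $\e\X_1^+$ should appear because the single-step discretization error is, by self-similarity and the strict-stable scaling, proportional to $\e\X_1^+$; the sign and the $-\zeta\bigl(\frac{\a-1}{\a}\bigr)$ arise from the analytic continuation / Abel summation of the divergent-looking series (recall $\zeta$ is negative on $(0,1)$, matching $V_n>0$). The restriction $\a\in(1,2]$ enters exactly where $\e\X_1^+<\infty$, i.e.\ where the stable law has a finite first moment.

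\medskip

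\textbf{Main obstacle.} The hardest part is not the self-similarity bookkeeping but establishing the Spitzer-type decomposition \emph{and} justifying the passage to the limit, i.e.\ uniform integrability of $V_n$ so that convergence in distribution from~\eqref{eq:V} upgrades to convergence of means. I would need to bound $\sup_n\e V_n^{1+\d}$ or show $\{V_n\}$ is uniformly integrable, likely via a moment bound on the single-step overshoot $\sup_{t\in[0,1]}\X_t-\X_1$ combined with summable tail estimates from the stable scaling. The delicate point is that the naive series $\sum_m m^{-(1-1/\a)}$ diverges, so the finite-$n$ identity must be arranged (following~\cite{agp}) so that the \emph{regularized} sum converges to $-\zeta\bigl(\frac{\a-1}{\a}\bigr)$; getting that regularization right, rather than the raw divergent series, is where I expect the real work to lie.
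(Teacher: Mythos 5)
Your proposal assembles the right ingredients --- Spitzer's identity, the collapse of all time-dependence to powers of $t^{1/\a}$ via self-similarity, and the appearance of $\zeta\bigl(\frac{\a-1}{\a}\bigr)$ as the regularized value of the divergent series $\sum_k k^{1/\a-1}$ --- but the route you sketch is considerably more elaborate than what is needed, and you have mislocated the difficulty. The paper's proof is a direct two-line computation with no cell decomposition, no persistence probabilities, and no reweighting by where the maximum falls: Spitzer's identity applied to the continuous-time supremum gives $\e\sup_{t\in[0,1]}\X_t=\int_0^1 t^{-1}\e\X_t^+\,\D t=\a\,\e\X_1^+$, and applied to the sampled walk gives $\e\max_k\X_{k/n}=\sum_{k=1}^n k^{-1}\e\X_{k/n}^+=n^{-1/\a}\sum_{k=1}^n k^{1/\a-1}\,\e\X_1^+$; subtracting and multiplying by $n^{1/\a}$ yields the \emph{exact} identity $\e V_n=\bigl(\a n^{1/\a}-\sum_{k=1}^n k^{1/\a-1}\bigr)\e\X_1^+$, and the limit follows from the classical expansion $\sum_{k\le n}k^{-s}=\frac{n^{1-s}}{1-s}+\zeta(s)+o(1)$ with $s=\frac{\a-1}{\a}\in(0,1)$.

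The ``main obstacle'' you flag --- uniform integrability of the $V_n$ so that convergence in distribution upgrades to convergence of means --- is not part of this proposition at all. The statement asserts convergence of the deterministic sequence $\e V_n$ to an explicit constant, and since $\e V_n$ is computed in closed form for every $n$, no interchange of limit and expectation occurs anywhere. Uniform integrability would only be needed to conclude the further statement $\e V_n\to\e V$, which the paper explicitly leaves open in the paragraph following the proposition. So the ``real work'' you anticipate (establishing the Spitzer-type cell decomposition and proving UI) is either unnecessary or belongs to a different, genuinely harder question; the regularization producing $-\zeta\bigl(\frac{\a-1}{\a}\bigr)$ is just the constant term in the Euler--Maclaurin expansion of a partial zeta sum, not a delicate Abel-summation argument.
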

\begin{proof}
Using Spitzer's identity   and self-similarity one readily finds that
\[\e \sup_{t\in[0,1]}\X_t=\int_0^1\frac{1}{t}\e \X^+_t\D t=\int_0^1\frac{1}{t} t^{1/\alpha}\D t\,\e \X^+_1=\alpha\e \X^+_1.\]
A similar calculation reveals that 
\[\e \max_{k=0,\ldots,n}\X_{k/n}=\sum_{k=1,\ldots,n}\frac{1}{k}\e\X^+_{k/n}=n^{-1/\alpha}\sum_{k=1,\ldots,n}k^{1/\alpha-1}\e\X^+_{1}.\]
Finally, we find using a well-known formula~\cite[(23.2.9)]{handbook} that
\[\e V_n=\left(\a n^{1/\a}-\sum_{k=1,\ldots,n}k^{1/\alpha-1}\right)\e\X^+_{1}\to-\zeta\left(\frac{\a-1}{\a}\right)\e\X^+_{1}.\]
\qed
\end{proof}

In the following we assume that $\X$ is a strictly $\a$-stable process with $\a\in (1,2)$, in which case the characteristic exponent can be written as
\begin{equation*}
\log \e\ee^{i\theta \X_1}=-|\theta|^\a\left(1-i\beta\tan\frac{\pi\a}{2}{\rm sgn}\theta\right),
\end{equation*}
where $\beta\in[-1,1]$ is the skewness parameter; the scale parameter is fixed to~1. The constants $c_\pm$ appearing in the corresponding L\'evy measure, see e.g.~\cite[Eq.~(5)]{ivanovs_zooming}, are obtained from $\beta=(c_+-c_-)/(c_++c_-)$ and $-\Gamma(-\alpha)\cos(\pi\alpha/2)=1/(c_++c_-)$, see~\cite[p.\ 85]{sato}.  
The formula for $\e\X_1^+$ can be found in~\cite[Thm.\ 3]{zolotarev} (stated for an alternative parameterization):
\begin{align}\label{eq:zolotarev}
\e \X^+_1=\frac{\sin (\pi\rho)\Gamma\left(1-1/\a\right)}{\pi |\cos(\pi\alpha(\rho-1/2))|^{1/\alpha}},\qquad\rho=\frac{1}{2}+\frac{\arctan\left(\beta\tan(\pi(\a-2)/2)\right)}{\pi\a}.
\end{align}
For $\beta=0$ we get $\e \X^+_1=\Gamma\left(1-1/\a\right)/\pi$ and for $\beta=\pm 1$ (one-sided jumps) we have
\[\e \X^+_1=\frac{\sin (\pi/\alpha)\Gamma\left(1-1/\a\right)}{\pi |\cos(\pi\alpha/2)|^{1/\alpha}}.\]
It is noted that~\cite[Thm.\ 4.2.1]{chen_thesis} provides a complete asymptotic expansion of $\e V_n$, but only for the symmetric case. 
Indeed, we recover the first term in this expansion using Proposition~\ref{prop:spitzer} and the above formula for $\beta=0$. 

In the setting of Proposition~\ref{prop:spitzer} it is to be expected that $\e V_n\to \e V$, which would then yield an explicit formula for $\e V$.  The missing 
technical detail is uniform integrability of $V_n$'s, which is non-trivial to establish even for a self-similar process $\X$. In this respect, we note  that~\cite[Thm.\ 4.2]{giles_xia} provides some asymptotic bounds on the moments of the error $\e(n^{-1/\a}V_n)^p$ for $p\geq 1$, but those are not strong enough to show that 
$\e V_n^p$ are bounded for some $p>1$.

\bigskip
Importantly,~\eqref{eq:V} provides a way to simulate an approximate realization of $V$ by simulating 
\begin{equation}\label{0107b}\max_{i=0,\ldots, mn} \X_{i/m}-\max_{i=0,\ldots, n} \X_{i}\end{equation}
for large integers $m,n$.
To illustrate the dependence of $V$ on $\alpha\in(0,1)\cup(1,2)$ and $\beta\in[-1,1]$, we took $m=n=100$
and  simulated 50.000\footnote{The exception was the case $\alpha=0.85,\,\beta=0.5$ where
for some reason 1.000.000 replications were needed to get even the present degree
of smoothness.} replications of~\eqref{0107b} (using the standard
Chambers-Mallows-Stuck algorithm for generating stable r.v.'s).  
Based on this, an estimate of the density of $V$ was evaluated via Matlab's
standard kernel smoothing procedure {\tt ksdensity} with default parameters.

The results are given in Figures~\ref{VFigDa}--\ref{VFigDb} for various combinations of $\alpha$ and $|\beta|$, since $\pm\beta$ lead to the same distribution of $V$ according to Proposition~\ref{Prop:0107a}.
The overall picture from these figures and further experiments
is that 
for smaller $\a$ the distribution of $V$ becomes extremely variable
with much heavier tails  (as expected) and also more mass accumulating
close to the origin. As $\alpha\uparrow 2$, the role of $\beta$ becomes less prominent.
Also, one approaches (again as expected) the 
Brownian $V$ and already for $\alpha>1.5$, the difference is not that substantial.
In conclusion, smaller $\a$ leads to a better rate (since $1/a_{1/n}$ is regularly varying at $\infty$ with index $1/\a$) but more disperse limit~$V$.

 \begin{figure}[htb] 
\begin{minipage}{.50\textwidth}
  \centering
  \includegraphics[width=\linewidth]{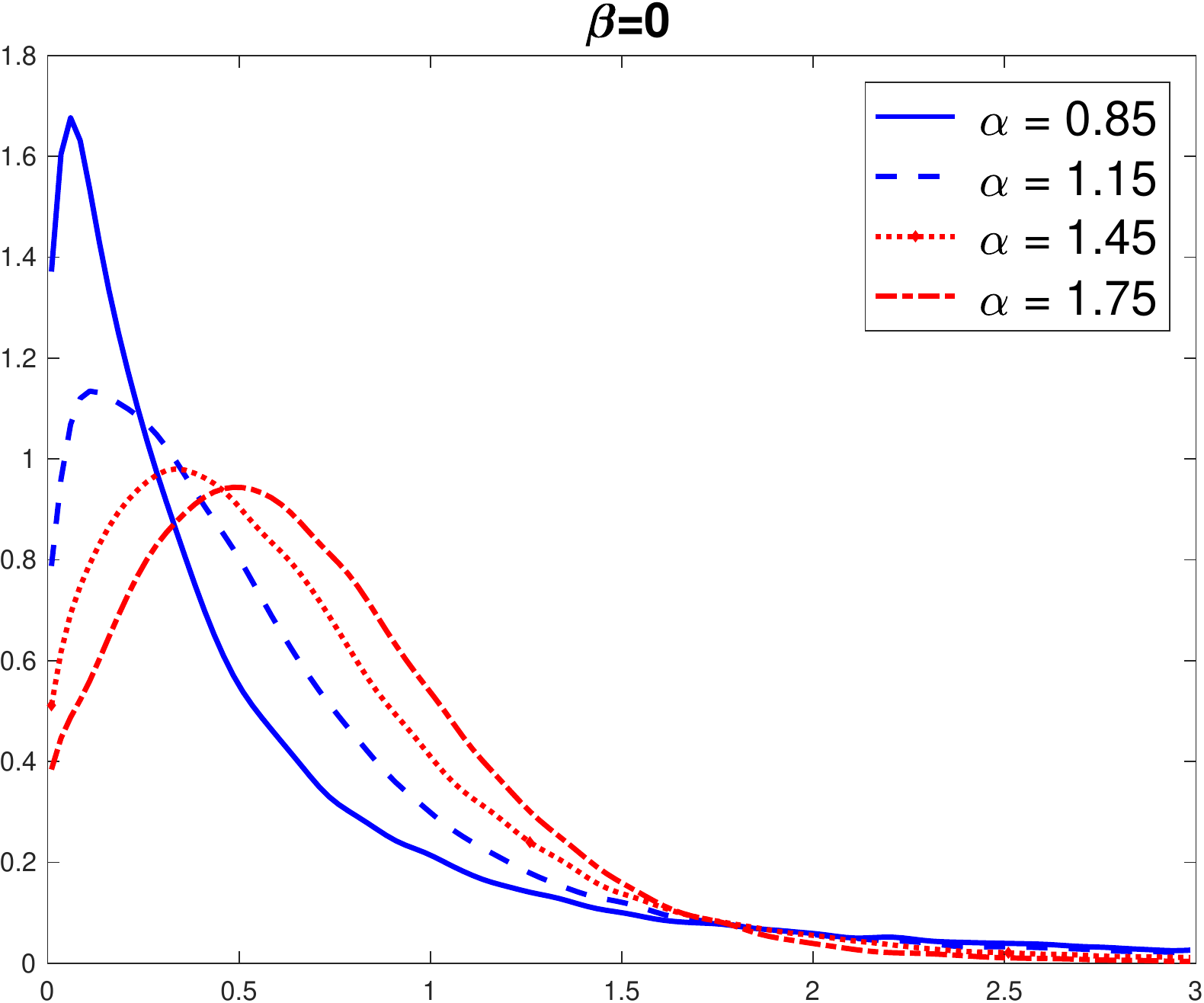}
  \label{VFigDa1}
\end{minipage}%
\begin{minipage}{.50\textwidth}
  \centering
  \includegraphics[width=\linewidth]{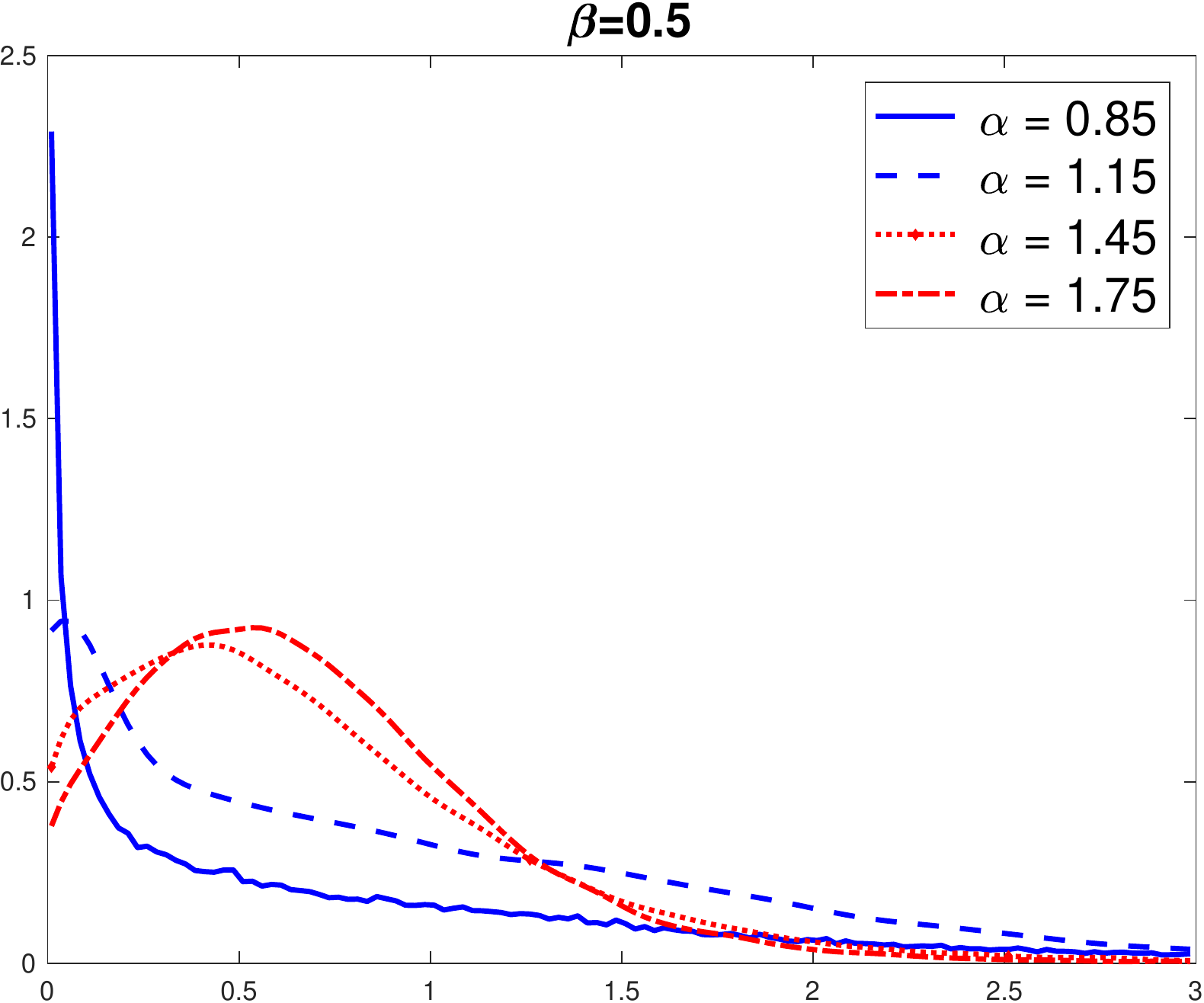}
  \label{VFigDa2}
\end{minipage}%
\caption{Estimated densities of $V$: role of $\alpha$}
\label{VFigDa}
\end{figure}

 \begin{figure}[htb] 
\begin{minipage}{.50\textwidth}
  \centering
  \includegraphics[width=\linewidth]{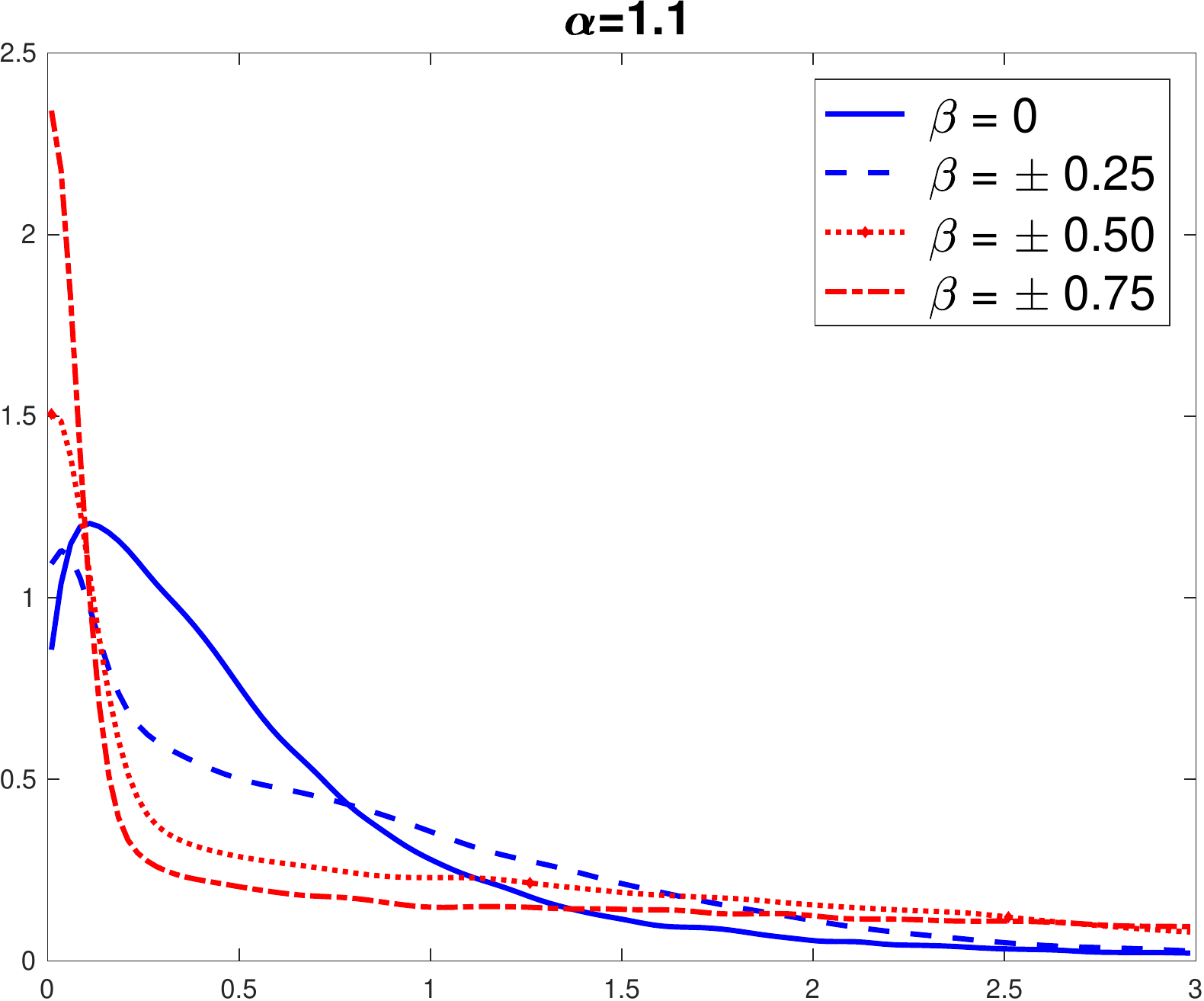}
  \label{VFigDb1}
\end{minipage}%
\begin{minipage}{.50\textwidth}
  \centering
  \includegraphics[width=\linewidth]{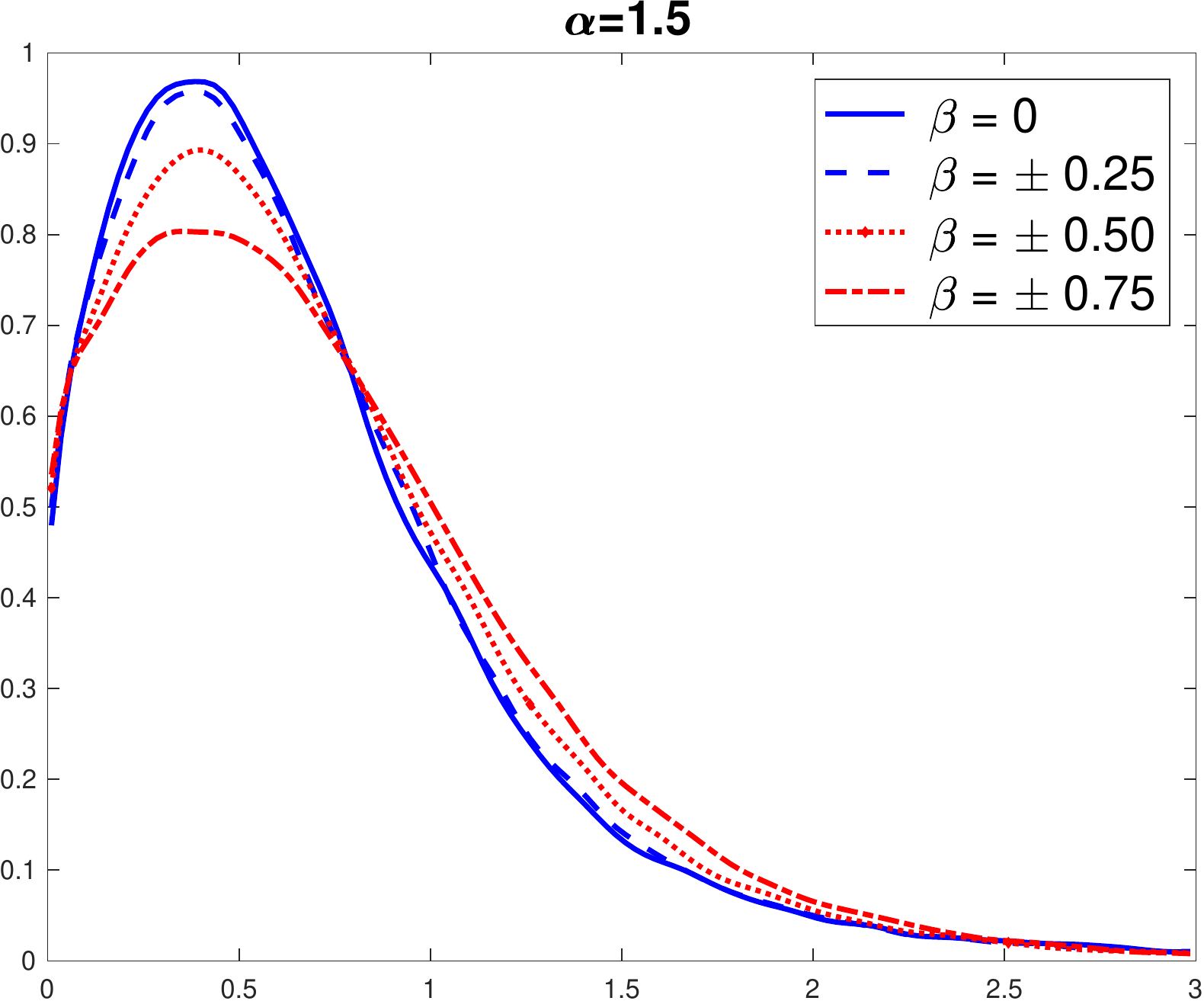}
  \label{VFigDb2}
\end{minipage}%
\caption{Estimated densities of $V$: role of $\beta$}
\label{VFigDb}
\end{figure}

\subsection*{Acknowledgements}
This paper is dedicated to Ward Whitt in appreciation
of his fundamental contributions to applied probability
over several decades. The study links to Ward's work
in several directions. One  is his work \cite{AW} (with Abate)
on one-sided reflection, another is weak convergence
as lucidly exposed in his book \cite{whitt}. Also,
the martingale technique developed by him
and Kella in \cite{KellaW} was the key tool in our first study \cite{Mats} of
two-sided reflection for L\'evy processes.

Additionally, we are thankful to Krzysztof Bisewski for pointing out that the former version of~\eqref{eq:zolotarev} was incorrect.
%
%
%

\end{document}